\numberwithin{equation}{section}
\newtheorem{Theorem}{Theorem}[section]
\newtheorem{Lemma}{Lemma}[section]
\newtheorem{Example}{Example}[section]
\newtheorem{Algorithm}{Algorithm}[section]
\newbox\@temp
\def \blap#1{\vbox to 0mm{#1\vss}}
\newcommand{\bset}[3][2mm]{#3\llap{%
      \blap{\vskip#1\hbox to 0mm{\hss $#2$\hss}}%
      \setbox\@temp\hbox{\mbox{$#3$}}\hskip0.5\wd\@temp}}
\def\hhline{%
  \noalign{\ifnum0=`}\fi\hrule \@height 3\arrayrulewidth \futurelet
   \@tempa\@xhline}
\title{The Shifting Technique for Computing the Extreme Solutions of
$X + A^\top X^{-1} A = Q$
}
\author{Chun-Yueh Chiang
\thanks{Center for General Education,National Formosa University, Huwei 632, Taiwan. {\tt (chiang@nfu.edu.tw)}}
%\and Hung-Yuan Fan\thanks{Department of Mathematics, National
%Taiwan Normal University, Taipei 116, Taiwan. {\tt (hyfan@math.ntnu.edu.tw)}}
\and Matthew M. Lin
\thanks{Corresponding Author. Department of Mathematics, National Chung Cheng University, Chia-yi 621, Taiwan. {\tt (mlin@math.ccu.edu.tw)}
}
}
\begin{document}
\maketitle
\begin{abstract}
We
propose a new way for speeding up the search of the maximal solution $X_+$ of $X + A^\top X^{-1} A = Q$.
It is known that the speed of convergence of traditional approaches for solving this problem depends highly on the spectral radius $\rho(X_+^{-1}A)$.  If $\rho(X_+^{-1}A)$ is close to one or equal to one, the iterations of traditional approaches converges very slowly or does not converge.  Our goal is to come up with a shifting tactic to remove the singularities embedded in $\rho(X_+^{-1}A)$. Finally, an example is used to demonstrate the capacity of our method.

\end{abstract}

%\begin{keywords}
%\end{keywords}
%%
%%\begin{AMS} 15A24, 93B05, 93B07, 93C55
%%\end{AMS}
%\textbf{AMS subject classifications.} 15A24, 65F10
\pagestyle{myheadings} \thispagestyle{plain}
%\markboth{C.-Y. CHIANG,Matthew M.Lin, }
%\markboth{Chiang, Fan and Lin}{NME}

%
%
%
\section{Introduction}
Consider the nonlinear matrix equation (NME)
\begin{eqnarray}
     X + A^\top X^{-1} A = Q, \label{eq:NMEs}
\end{eqnarray}
where  $A, Q \in \mathbb{R}^{n \times n}$ and $Q$ is  symmetric positive definite. We define two corresponding matrices of~\eqref{eq:NMEs}
\begin{equation}\label{ml}
   \mathcal{M} \equiv
   \left [
   \begin{array}{rc} A & 0 \\ Q & -I_n
   \end{array} \right ] , \quad
   \mathcal{L} \equiv \left [ \begin{array}{lc} 0 & I_n \\ A^\top & 0 \end{array}
   \right ].
\end{equation}
Note that the pencil $\mathcal{M} - \lambda
\mathcal{L}$ is symplectic, i.e., it satisfies
\[ \mathcal{M} J \mathcal{M}^\top = \mathcal{L} J \mathcal{L}^\top \ , \
\ \ \mathrm{with} \ \ J \equiv
\left [
\begin{array}{rc} 0 & I_n \\ -I_n & 0
\end{array} \right ],
\]
and $\lambda \in \sigma (\mathcal{M},\, \mathcal{L})$ if and only if
$1/\lambda \in \sigma (\mathcal{M},\, \mathcal{L})$. It is easy to see that
\begin{equation}\label{big}
\mathcal{M} \left [ \begin{array}{c} I_n \\ X \end{array}\right ] = \mathcal{L}
\left [ \begin{array}{c} I_n \\ X \end{array}\right ] X^{-1} A,
\end{equation}
and $\sigma(X^{-1} A)\subseteq\sigma(\mathcal{M} ,\mathcal{L} )$.

For a symmetric matrix $X$, we use the notation  $X \succ 0$ to say that $X$ is positive definite and the notation $X \succeq 0$ to say that $X$ is positive semidefinite. It follows that for two symmetric matrices $X$ and $Y$, we write  $X \succ Y$ if $X - Y \succ 0$ and $X \succeq Y$ if $ X-Y  \succeq 0$.

A symmetric solution $X_{+}$ of \eqref{eq:NMEs} is called maximal if $X_{+} \succeq X$ for any symmetric solution $X$ of \eqref{eq:NMEs}. Conditions for the existence of a symmetric
positive definite solution and a maximal symmetric positive definite
solution of~\eqref{eq:NMEs} are discussed in~\cite{EngRanRij_93}.
\begin{Theorem}~\label{thm20}
Let $\psi(\lambda)$ be a rational matrix-valued function defined by
\begin{equation}\label{eq:psi}
\psi(\lambda) = Q + \lambda A + \lambda^{-1} A^\top.
\end{equation}
Then,~\eqref{eq:NMEs} has a symmetric positive definite solution if and only if $\psi ( \lambda )$ is regular, i.e., $\det\psi (\lambda)\neq 0$ for some $\lambda\in\mathbb{C}$, and $\psi ( \lambda ) \geq 0$ for all $| \lambda | = 1$.
%Moreover, if $\psi ( \lambda )$ is factorized in forms of
%\begin{equation}\label{eq:psi}
%\psi(\lambda)=(C_0^\ast+\lambda^{-1}C_1^\ast)(C_0+\lambda C_1)
%\end{equation}
%with $\det(C_0)\neq 0$,  then $X=C_0^\ast C_0$ is a solution of~\eqref{eq:NMEs} and
%every positive definite solution can be obtained in the same way.
\end{Theorem}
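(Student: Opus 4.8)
The plan is to treat the two implications separately. The forward implication is an elementary computation; the real content is in the reverse implication, which I would reduce to a matrix spectral factorization.

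\emph{Necessity.} If $X=X^{\top}\succ 0$ solves \eqref{eq:NMEs}, then $Q=X+A^{\top}X^{-1}A$, and substituting into \eqref{eq:psi} I would complete the square: using $\overline{\lambda}=\lambda^{-1}$ on $|\lambda|=1$,
\[
\psi(\lambda)=X+A^{\top}X^{-1}A+\lambda A+\lambda^{-1}A^{\top}=\bigl(\lambda^{-1}X+A\bigr)^{*}X^{-1}\bigl(\lambda^{-1}X+A\bigr)\succeq 0,
\]
a congruence of $X^{-1}\succ 0$ (note this also shows $\psi(\lambda)$ is Hermitian on the circle, so the inequality makes sense). Regularity follows from the same identity: $\psi(\lambda)$ is singular only where $\lambda^{-1}X+A$ is, and $\det(\lambda^{-1}X+A)=\lambda^{-n}\det(X+\lambda A)$ is not identically zero because the polynomial $\det(X+\lambda A)$ equals $\det X\neq 0$ at $\lambda=0$.

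\emph{Sufficiency.} Here the idea is to run the previous computation backwards. Suppose one can produce real matrices $W_{0},W_{1}$ with $W_{0}$ nonsingular such that $\psi(\lambda)=(W_{0}+\lambda W_{1})^{*}(W_{0}+\lambda W_{1})$ for all $|\lambda|=1$. Comparing the coefficients of $\lambda^{0}$ and $\lambda^{\pm1}$ forces $Q=W_{0}^{\top}W_{0}+W_{1}^{\top}W_{1}$ and $A=W_{0}^{\top}W_{1}$, and then $X:=W_{0}^{\top}W_{0}\succ 0$ satisfies
\[
X+A^{\top}X^{-1}A=W_{0}^{\top}W_{0}+W_{1}^{\top}W_{0}\bigl(W_{0}^{\top}W_{0}\bigr)^{-1}W_{0}^{\top}W_{1}=W_{0}^{\top}W_{0}+W_{1}^{\top}W_{1}=Q,
\]
so $X$ is the desired symmetric positive definite solution (and in fact $\rho(X^{-1}A)=\rho(W_{0}^{-1}W_{1})\le 1$, so it is the maximal one). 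Thus everything reduces to producing this factorization: $\psi(\lambda)$ is a Hermitian matrix trigonometric polynomial of degree one that, by hypothesis, is positive semidefinite on $|\lambda|=1$ and regular, and the matrix spectral factorization theorem (of Fej\'er--Riesz / Wiener--Masani type) supplies an \emph{outer} factor $W(\lambda)=W_{0}+\lambda W_{1}$, analytic with $\det W$ zero-free in $|\lambda|<1$; in particular $\det W_{0}=\det W(0)\neq 0$, which gives the needed nonsingularity of $W_{0}$, and the real symmetry $\overline{\psi(\lambda)}=\psi(\overline{\lambda})$ lets one take $W$ real.

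I expect the single real obstacle to be this factorization in the case where $\psi(\lambda)$ is only positive semidefinite on the circle, i.e.\ where $\det\psi(\lambda)$ vanishes at some point with $|\lambda|=1$; there the factor can degenerate and one must use the refined version of the theorem, whose validity is precisely what the regularity hypothesis ($\det\psi\not\equiv 0$, a Szeg\H{o}-type integrability condition on $\log\det\psi$) guarantees. An alternative is to first handle the strictly definite case $\psi(\lambda)\succ 0$ on $|\lambda|=1$, where the factorization is the classical Wiener--Hopf construction and $X$ is the maximal solution, and then pass to the semidefinite case through the perturbation $Q\mapsto Q+\varepsilon I$ and a monotone limit $\varepsilon\downarrow 0$; the delicate step there is to check that the limit of the maximal solutions remains positive definite and still solves \eqref{eq:NMEs}, which I would control via $X_{+}(\varepsilon)=Q+\varepsilon I-A^{\top}X_{+}(\varepsilon)^{-1}A$ together with monotonicity of the maximal solution in $Q$. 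Complete details are in \cite{EngRanRij_93}.
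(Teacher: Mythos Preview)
The paper does not prove this theorem itself but merely quotes it from \cite{EngRanRij_93}; your sketch is correct and is essentially the spectral-factorization argument given in that reference, including the handling of the borderline semidefinite case via the regularity hypothesis.
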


Another necessary condition for the existence of a positive definite
solution for~\eqref{eq:NMEs} can be described as follows.
\begin{Theorem}\label{thm21}
If \eqref{eq:NMEs} has a symmetric positive
definite solution, then it has maximal
%and
%minimal
solution $X_+\succ 0$. % and $X_- \succ 0$, respectively.
Moreover, $X_+$ is the unique solution for which $\rho(X_+^{-1}A)\le 1$, that is, $\rho(X^{-1}A)> 1$ for any other solution $X\succ 0$. Here $\rho(\,\cdot\,)$
denote the spectral radius.
\end{Theorem}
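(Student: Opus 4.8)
The plan is to treat the two assertions separately: first the existence of a maximal positive definite solution, then the spectral characterization $\rho(X_+^{-1}A)\le 1$ together with its uniqueness consequence. For existence, I would run the monotone fixed-point iteration naturally attached to \eqref{eq:NMEs},
\[
 X_0=Q,\qquad X_{k+1}=\mathcal F(X_k):=Q-A^\top X_k^{-1}A ,\qquad k\ge 0 .
\]
The map $\mathcal F$ is order preserving on the positive definite cone, since $0\prec X\preceq Y$ forces $X^{-1}\succeq Y^{-1}$ and hence $A^\top X^{-1}A\succeq A^\top Y^{-1}A$. If $\widehat X\succ 0$ is any solution (one exists by hypothesis), then $\widehat X=Q-A^\top\widehat X^{-1}A\preceq Q=X_0$, and an induction using the monotonicity of $\mathcal F$ yields $\widehat X\preceq X_{k+1}\preceq X_k\preceq Q$ for every $k$. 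In particular each $X_k$ is positive definite, the iteration is well defined, and $\{X_k\}$ is nonincreasing in the Loewner order and bounded below, hence converges to a symmetric $X_+\succeq\widehat X\succ 0$ that solves \eqref{eq:NMEs} by continuity. Running the same induction with $\widehat X$ an \emph{arbitrary} solution shows $\widehat X\preceq X_k$ for all $k$, so $\widehat X\preceq X_+$; thus $X_+$ is the maximal solution.

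For $\rho(X_+^{-1}A)\le 1$ I would use that the Fr\'echet derivative of $\mathcal F$ at $X_+$ is the positive linear map $\mathcal F'_{X_+}\colon H\mapsto (X_+^{-1}A)^\top H(X_+^{-1}A)$, whose spectral radius equals $\rho(X_+^{-1}A)^2$. Writing $W_k=X_k-X_+\succeq 0$, the iteration gives $W_{k+1}=\mathcal F'_{X_+}(W_k)+o(\|W_k\|)$ with $W_k\downarrow 0$; pairing this recursion against a dual Perron eigenmatrix $\Phi\succeq 0$ of $\mathcal F'_{X_+}$ (which exists because $\mathcal F'_{X_+}$ maps the positive semidefinite cone into itself) shows that $\rho(\mathcal F'_{X_+})>1$ would force $\langle\Phi,W_k\rangle$ to grow geometrically, contradicting $W_k\to 0$; hence $\rho(X_+^{-1}A)\le 1$. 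For uniqueness, subtracting the two copies of \eqref{eq:NMEs} written for two solutions $X,Y\succ 0$ gives
\[
 X-Y=A^\top(Y^{-1}-X^{-1})A=(Y^{-1}A)^\top(X-Y)(X^{-1}A),
\]
so $X-Y=\big((Y^{-1}A)^{k}\big)^\top(X-Y)(X^{-1}A)^{k}$ for all $k\ge 1$. If $\rho(X^{-1}A)\le 1$ and $\rho(Y^{-1}A)\le 1$, then — using \eqref{big}, the symplectic symmetry $\lambda\leftrightarrow 1/\lambda$ of $\sigma(\mathcal M,\mathcal L)$, and Theorem~\ref{thm20} (the condition $\psi(\lambda)\succeq 0$ on $|\lambda|=1$ forces the unimodular eigenvalues of the pencil to have even partial multiplicities with a definite sign characteristic) — the graph deflating subspaces $\mathrm{Im}\left[\begin{array}{c} I_n\\ X\end{array}\right]$ and $\mathrm{Im}\left[\begin{array}{c} I_n\\ Y\end{array}\right]$ are forced to coincide, so $X=Y$. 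Taking $Y=X_+$ shows that $X_+$ is the only solution with $\rho((\cdot)^{-1}A)\le 1$, and that every other solution $X\succ 0$ satisfies $\rho(X^{-1}A)>1$.

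I expect the main obstacle to lie entirely in the boundary case $\rho(X_+^{-1}A)=1$, i.e. when the pencil $\mathcal M-\lambda\mathcal L$ has eigenvalues on the unit circle. Away from the unit circle everything is clean: the stable deflating subspace is unambiguous, the Perron argument does not require the delicate control of the $o(\|W_k\|)$ term (one may compare against a strictly positive eigenmatrix), and in the Stein identity above one of the two powers decays geometrically, so $X-Y=0$ is immediate. On the unit circle one must genuinely exploit the positive semidefiniteness of $\psi(\lambda)$ from Theorem~\ref{thm20} both to keep $(X_+^{-1}A)^k$ polynomially bounded and to pin down the splitting of each unimodular eigenvalue — equivalently, to guarantee that the ``half'' deflating subspace exists, is a graph subspace, is symmetric, and is unique. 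That structural input, rather than the order-theoretic estimates, is the crux of the theorem.
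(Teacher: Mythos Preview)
The paper does not prove Theorem~\ref{thm21}; it is quoted as a known result from \cite{EngRanRij_93} (and, implicitly, \cite{GuoLan_99_MC}), so there is no in-paper argument to compare against. Your existence argument via the monotone iteration $X_0=Q$, $X_{k+1}=Q-A^\top X_k^{-1}A$ is the standard one and is correct as written.

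The Perron step for $\rho(X_+^{-1}A)\le 1$, however, has a genuine gap. With $S=X_+^{-1}A$ and $W_k=X_k-X_+\succeq 0$, an exact computation gives
\[
S^\top W_k S - W_{k+1}
= A^\top X_+^{-1/2}\bigl(B_k^{1/2}-B_k^{-1/2}\bigr)^2 X_+^{-1/2}A\;\succeq\;0,
\qquad B_k:=X_+^{-1/2}X_kX_+^{-1/2},
\]
so the remainder in $W_{k+1}=\mathcal F'_{X_+}(W_k)+\text{(remainder)}$ is \emph{negative} semidefinite. Pairing with any $\Phi\succeq 0$ therefore yields only the upper bound $\langle\Phi,W_{k+1}\rangle\le \rho(S)^2\langle\Phi,W_k\rangle$, which is perfectly compatible with $W_k\to 0$ no matter how large $\rho(S)$ is; it does not force growth. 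One does have the companion lower bound $W_{k+1}\succeq S_k^\top W_k S_k$ with $S_k:=X_k^{-1}A\to S$, but converting this into $\langle\Phi,W_{k+1}\rangle\ge(\rho(S)^2-\varepsilon)\langle\Phi,W_k\rangle$ would require $\|W_k\|$ to be dominated by $\langle\Phi,W_k\rangle$, which fails whenever the Perron eigenmatrix $\Phi$ is singular --- and it typically is (rank one when the dominant eigenvalue of $S$ is simple). The proofs in \cite{EngRanRij_93,GuoLan_99_MC} bypass this entirely: they obtain $\rho(X_+^{-1}A)\le 1$ either from the spectral factorization of $\psi(\lambda)$ guaranteed by Theorem~\ref{thm20}, or by identifying the graph of $X_+$ with the weakly stable Lagrangian deflating subspace of $(\mathcal M,\mathcal L)$ --- precisely the structural input you already invoke for uniqueness in the boundary case. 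In short, your diagnosis that the unimodular case is the crux is right, but the same structural ingredient is already needed to get $\rho\le 1$ in the first place; the soft Perron/linearization route does not close on its own.
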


The NME  arises in a large variety of disciplines in sciences and engineering. Its wide range of applications includes control theory, ladder networks, dynamic programming, stochastic filtering, and statistics (the reader is referred to~\cite{Anderson90,Zhan_96_SISC} for a list of references).
Many other aspects of NMEs, like solvability, numerical solution,
perturbation and applications, can be found in
\cite{Engw_93,EngRanRij_93,GuoLan_99_MC,Meini_02_MC,Sun03,Xu_00_ASNUP,Zhan_96_SISC,ZhanXie_96_LAA}
and the references therein.  Numerical approaches for obtaining the maximal solution of~\eqref{eq:NMEs} are mainly based on fixed-point iteration, Newton's iteration,% cyclic reduction,
or a structure-preserving doubling algorithm (SDA)~\cite{Guo_01SIMAA,GuoLan_99_MC,Meini_02_MC, Lin06}. However, the convergence rates of all of these methods have been shown to be slow while $\rho(X^{-1}_+A ) \approx 1$. Note that given a matrix norm $\|\cdot\|$, an algorithm is linearly convergent if it generates a sequence of approximate solutions $\{X_k\}$ to the solution $X$ such that $\|X_k - X\|\leq \gamma \sigma^n$ for constants $0<\sigma<1$ and $\gamma > 0$ and quadratically convergent if $\|X_k - X\|\leq \gamma \sigma^{2^n}$.
Our major concern is to discuss the improvement of traditional approaches
while solving the NMEs with $\rho(X^{-1}_+A ) \approx 1$.

Our contribution in this work can be organized as follows. In Section 2, we review three iterative methods for finding the maximal solution of~\eqref{eq:NMEs}. In Section 3, we discuss how to shift eigenvalues of a general matrix pencil. A numerical example is given to demonstrate the application of the shifting technique.  In Section 4, we summarize our results and suggest an avenue for further research on applications of this shifting technique for solving matrix equations.

\section{Numerical approaches}
In this section we briefly review the numerical approaches
for solving NME and the corresponding convergence rate of each method.
We start our discussion with the fixed point iteration.

\subsection{Fixed point iteration}
Let $X_+$ be the maximal solution of~\eqref{eq:NMEs}.
\begin{Algorithm}\label{fix1}
{\emph{(Fixed point iteration for~\eqref{eq:NMEs})}}
\begin{enumerate}
\item  \emph{Take} $X_0 =Q$.
\item \emph{For} $k=0,1,\ldots,$ \emph{compute}
\begin{eqnarray}
X_{k+1}&=&Q-A^\top X_k ^{-1}A.\label{nmefix1}
\end{eqnarray}
\end{enumerate}
\end{Algorithm}

It has been shown in~\cite{GuoLan_99_MC} that the sequence  $\{X_k\}$ converges to $X_+$ and satisfies
\[
\limsup\limits_{k\rightarrow\infty} \sqrt[k]{\|X_k-X_+\|} \leq \rho(X_+^{-1}A)^2.
\]
To speed up computation, Zhan~\cite{Zhan_96_SISC} incorporated the Schulz iteration~\cite{Schulz33}
to provide an inversion-free variant method when $Q = I_n$.  This idea is then generalized in~\cite{GuoLan_99_MC} for general positive definite $Q$ as follows.
\begin{Algorithm}\label{fix2}
{\emph{(Inversion-free fixed point iteration for~\eqref{eq:NMEs})}}
\begin{enumerate}
\item  \emph{Take} $X_0 =Q$, $Y_0 = I_n/\|Q\|_\infty$.
\item \emph{For} $k=0,1,\ldots,$ \emph{compute}
\begin{subequations}\label{NMEINV}
\begin{eqnarray}
Y_{k+1}&=&Y_k(2I_n-X_kY_k),\label{NMEINV2}\\
X_{k+1}&=&Q-A^\top Y_k A.\label{NMEINV1}
\end{eqnarray}
\end{subequations}
\end{enumerate}
\end{Algorithm}
Note that Algorithm~\ref{fix2} requires more computation per iteration than Algorithm~\ref{fix1}. However, Its four matrix-matrix multiplication can be calculated in a parallel computing system effectively~\cite{GolVLoa1996}. Its numerical computation is more stable than Algorithm~\ref{fix1} due to the scheme for not computing the matrix inversions directly. The following result shows that the convergence rate of Algorithm~\ref{fix1} is roughly the same as that of Algorithm~\ref{fix2}.

\begin{Theorem}\cite{GuoLan_99_MC}\label{nmethmfix}
For $\epsilon >0$, the two generated sequences
$\{X_k\}$  and $\{Y_k\}$ of~\eqref{NMEINV} satisfy
\[
X_0\geq X_1\geq \cdots,\quad Y_0\leq Y_1\leq \cdots,
\]
\[
\|Y_{k+1} - X^{-1}_+\| \leq (\|AX_+^{-1}\| +\epsilon)^2 \|Y_k - X^{-1}_+\|,
\]
and
\[
\|X_k - X_+\| \leq \|A\|^2 \|Y_k - X^{-1}_+\|.
\]
\end{Theorem}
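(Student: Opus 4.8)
\emph{Proof proposal.} The plan is to obtain all three assertions from one induction on $k$ that establishes the chain of matrix inequalities
\[
0\prec Y_{k-1}\preceq Y_k\preceq X_+^{-1},\qquad Y_k\preceq X_k^{-1},\qquad 0\prec X_+\preceq X_{k+1}\preceq X_k\preceq Q ;
\]
the monotonicity claims are then immediate, and the two norm bounds fall out of error identities obtained by subtracting $X_+=Q-A^\top X_+^{-1}A$ and $X_+X_+^{-1}=I_n$ from the two recursions. For the base case, symmetry of $Q\succ0$ gives $\|Q\|_\infty=\|Q\|_1\ge\|Q\|_2=\lambda_{\max}(Q)$, hence $Q\preceq\|Q\|_\infty I_n$; this yields $0\prec Y_0=I_n/\|Q\|_\infty\preceq Q^{-1}\preceq X_+^{-1}$ (the last step since $X_+\preceq Q$) and $X_0Y_0=Q/\|Q\|_\infty\preceq I_n$, i.e.\ $Y_0\preceq X_0^{-1}$, from which $Y_1-Y_0=Y_0(I_n-X_0Y_0)\succeq0$, $X_0-X_1=A^\top Y_0A\succeq0$ and $X_1-X_+=A^\top(X_+^{-1}-Y_0)A\succeq0$.

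For the inductive step, assume the chain through index $k$. Then $Y_{k+1}-Y_k=Y_k(I_n-X_kY_k)=Y_k^{1/2}(I_n-Y_k^{1/2}X_kY_k^{1/2})Y_k^{1/2}\succeq0$ by $Y_k\preceq X_k^{-1}$; conjugating the Schulz relation $I_n-X_kY_{k+1}=(I_n-X_kY_k)^2$ by $X_k^{1/2}$ gives $I_n-X_k^{1/2}Y_{k+1}X_k^{1/2}=(I_n-X_k^{1/2}Y_kX_k^{1/2})^2\succeq0$, hence $Y_{k+1}\preceq X_k^{-1}$, and with $X_{k+1}\preceq X_k$ also $Y_{k+1}\preceq X_{k+1}^{-1}$; the identity $X_+^{-1}-Y_{k+1}=(X_+^{-1}-Y_k)X_+(X_+^{-1}-Y_k)+Y_k(X_k-X_+)Y_k$ exhibits $X_+^{-1}-Y_{k+1}$ as a sum of two positive semidefinite matrices (the second because $X_k\succeq X_+$), so $Y_{k+1}\preceq X_+^{-1}$; and subtracting the $X$-recursions gives $X_{k+1}-X_+=A^\top(X_+^{-1}-Y_k)A\succeq0$ together with $X_{k+1}-X_k=-A^\top(Y_k-Y_{k-1})A\preceq0$. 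This closes the induction, and the identity $X_{k+1}-X_+=A^\top(X_+^{-1}-Y_k)A$ immediately gives $\|X_{k+1}-X_+\|\le\|A\|^2\|Y_k-X_+^{-1}\|$.

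For the convergence rate, take norms in the error identity for $Y$. Writing the first summand as $[X_+^{1/2}(X_+^{-1}-Y_k)]^\top[X_+^{1/2}(X_+^{-1}-Y_k)]$ bounds it by $\|X_+\|\,\|X_+^{-1}-Y_k\|^2$, and the second, $(AY_k)^\top(X_+^{-1}-Y_k)(AY_k)$, is bounded by $\|AY_k\|^2\,\|X_+^{-1}-Y_k\|$, so
\[
\|X_+^{-1}-Y_{k+1}\|\le\bigl(\|X_+\|\,\|X_+^{-1}-Y_k\|+\|AY_k\|^2\bigr)\,\|X_+^{-1}-Y_k\| .
\]
Since $\{Y_k\}$ increases with $Y_k\preceq X_+^{-1}$ and $\{X_k\}$ decreases with $X_k\succeq X_+$, both converge, to some $Y_\infty\succ0$ and $X_\infty\succeq X_+$; passing to the limit in both recursions gives $Y_\infty=X_\infty^{-1}$ and $X_\infty=Q-A^\top X_\infty^{-1}A$, so $X_\infty$ is a symmetric positive definite solution and the maximality in Theorem~\ref{thm21} forces $X_\infty=X_+$, $Y_\infty=X_+^{-1}$. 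Hence $\|X_+^{-1}-Y_k\|\to0$ and $\|AY_k\|\to\|AX_+^{-1}\|$, so the coefficient above tends to $\|AX_+^{-1}\|^2$ and is eventually at most $(\|AX_+^{-1}\|+\epsilon)^2$.

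The main obstacle is this last step: pressing the one-step contraction factor into the clean form $(\|AX_+^{-1}\|+\epsilon)^2$. Both the genuinely second-order term $\|X_+\|\,\|X_+^{-1}-Y_k\|$ and the gap between $\|AY_k\|^2$ and $\|AX_+^{-1}\|^2$ have to be absorbed into $\epsilon$, which succeeds only for $k$ large, so the rate bound is asymptotic in nature; one must also track carefully how the indices of the $X$- and $Y$-iterates are coupled by the recursion, since as displayed $X_{k+1}$ depends on $Y_k$.
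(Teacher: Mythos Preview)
The paper does not give its own proof of this theorem: it is quoted verbatim from \cite{GuoLan_99_MC} and immediately followed by commentary, with no argument supplied. So there is nothing in the paper to compare your approach against.

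That said, your reconstruction is essentially the Guo--Lancaster argument and is sound. The induction built on the Schulz identity $X_k^{-1}-Y_{k+1}=(X_k^{-1}-Y_k)X_k(X_k^{-1}-Y_k)$ and the key decomposition
\[
X_+^{-1}-Y_{k+1}=(X_+^{-1}-Y_k)\,X_+\,(X_+^{-1}-Y_k)+Y_k(X_k-X_+)Y_k
\]
is exactly the mechanism used in the original source; the passage to the limit via monotonicity and the maximality of $X_+$ is also standard. Your closing caveats are accurate and worth keeping: the factor $(\|AX_+^{-1}\|+\epsilon)^2$ is obtained only for $k$ sufficiently large, since both the quadratic term $\|X_+\|\,\|X_+^{-1}-Y_k\|$ and the drift $\|AY_k\|^2-\|AX_+^{-1}\|^2$ must be absorbed into $\epsilon$; and with the indexing $X_{k+1}=Q-A^\top Y_kA$ used here the error identity actually yields $\|X_{k+1}-X_+\|\le\|A\|^2\|Y_k-X_+^{-1}\|$, so the third displayed inequality as stated carries an index shift relative to what the recursion directly gives (and in the second summand above one has $X_k-X_+=A^\top(X_+^{-1}-Y_{k-1})A$, introducing $Y_{k-1}$ rather than $Y_k$). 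None of this affects the conclusion that the inversion-free iteration converges with asymptotic rate governed by $\|AX_+^{-1}\|$.
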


It follows that the convergence rate of iteration~\eqref{nmefix1} or \eqref{NMEINV} is highly related to the spectral radius $\rho(X_+^{-1}A)$.
Also, we see that Algorithm~\ref{fix1} and~\ref{fix2} are linearly convergent when $\rho(X_+^{-1}A) < 1$ and the convergences slow down when $\rho(X_+^{-1}A) $ is close to $1$. In these situation, an alternative approach, Newton's method, is recommended.
\subsection{Newton's method}
Let $\mathcal{P}^n$ be the set of positive definite matrices in $\mathbb{R}^{n\times n}$ and $\mathcal{S}^n$ be the set of all symmetric $n \times n$ real matrices. Corresponding to~\eqref{eq:NMEs}, we define an operator ${\mathcal R }:\mathcal{P}^n \rightarrow{\mathcal S^n } $ satisfying
\begin{align}\label{NMEOPERATOR}
{\mathcal R }(X)=-X+Q-A^\top X^{-1}A, \quad X\succ 0.
\end{align}
Then the Fr$\acute{e}$chet derivative
${\mathcal R }^{'}_X: \mathcal{S}^n \rightarrow \mathcal{S}^n$
of ${\mathcal R }$ at $X$ is given by
\begin{align}\label{Frechet}
{\mathcal R }^{'}_X(Z)=-Z+A^\top X^{-1}ZX^{-1}A, \quad Z\succ 0.
\end{align}
By~\eqref{NMEOPERATOR} and~\eqref{Frechet}, we obtain the Newton step for the solution of~\eqref{eq:NMEs} is
\[
X_k = X_{k-1} -  {(\mathcal{R}^{'}_{X_{k-1}})}^{-1}  {\mathcal R }(X_{k-1}), \quad k = 1,2, \ldots.
\]
Combining~\eqref{NMEOPERATOR} and~\eqref{Frechet}, we have the algorithm of Newton's method for~\eqref{eq:NMEs}.
\begin{Algorithm}\label{newton}
{\emph{(Newton's method for~\eqref{eq:NMEs})}}
\begin{enumerate}
\item  \emph{Take} $X_0 =Q$.
\item \emph{For} $k=0,1,\ldots,$ \emph{compute}
$L_k=X_{k-1}^{-1}A$ \emph{and solve}
\begin{eqnarray*}
X_k-L_k^\top X_k L_k=Q-2L_k^\top A.
\end{eqnarray*}
\end{enumerate}
\end{Algorithm}
It is known that in each iteration,  the computational work for Newton's method is about $15$ times larger than that
for the fixed point iteration. In order to have a better picture of the advantage of applying Newton's method, we include the convergence result discussed in~\cite{GuoLan_99_MC} as follows.
\begin{Theorem}\label{NMENTMTHM}
If \eqref{eq:NMEs} has a positive definite solution, then Algorithm~\ref{newton} determines a nondecreasing sequence of symmetric
matrices $\{X_k\}$ for which $\rho(L_k)<1$ and $\lim\limits_{k\rightarrow\infty} X_k=X_+$.
Moreover, if $\rho(X_+^{-1}A)<1$,
the convergence is quadratic and  if
$\rho(X_+^{-1}A)=1$ and all eigenvalues of $X_+^{-1}A$ on the unit  circle are semisimple, the convergence is either quadratic or
linear with rate $1/2$.
\end{Theorem}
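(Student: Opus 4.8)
The plan is to analyze Newton's iteration through the order-theoretic monotonicity properties of the Fréchet derivative, following the classical Kantorovich-type framework but exploiting the special algebraic structure of~\eqref{eq:NMEs}. First I would verify that the iteration is well-defined and produces a monotone sequence. The key observation is that $\mathcal{R}'_X$ is a \emph{negative} operator on an appropriate cone: writing $\mathcal{R}'_X(Z) = -Z + (X^{-1}A)^\top Z (X^{-1}A)$, if $\rho(X^{-1}A) < 1$ then $-\mathcal{R}'_X$ is invertible and its inverse is a positive operator, given explicitly by the convergent series $(-\mathcal{R}'_X)^{-1}(W) = \sum_{j\geq 0} (L^\top)^j W L^j$ with $L = X^{-1}A$. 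Starting from $X_0 = Q$, one checks $\mathcal{R}(X_0) = -A^\top Q^{-1} A \preceq 0$, and then one argues inductively that at each step $\mathcal{R}(X_{k-1}) \preceq 0$ forces $X_k \succeq X_{k-1}$ (since the Newton correction is $-(\mathcal{R}'_{X_{k-1}})^{-1}\mathcal{R}(X_{k-1})$, a positive operator applied to a positive-semidefinite matrix), that $X_k \succ 0$ remains invertible, that $\rho(L_k) < 1$ (this follows from Theorem~\ref{thm21} once one knows $X_k \preceq X_+$), and that $X_k \preceq X_+$. The last inclusion comes from a convexity/Taylor argument: $\mathcal{R}$ is concave with respect to the Löwner order on $\mathcal{P}^n$ (because $X \mapsto -A^\top X^{-1}A$ is operator concave), so the Newton iterate never overshoots the solution.

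Next I would establish convergence of the monotone bounded sequence. Since $X_0 \preceq X_1 \preceq \cdots \preceq X_+$, the limit $X_\infty := \lim_k X_k$ exists; continuity of $\mathcal{R}$ and of the iteration map on the (closed, order-bounded) set $\{X : X_0 \preceq X \preceq X_+\} \cap \mathcal{P}^n$ shows $X_\infty$ is a fixed point of the Newton map, hence $\mathcal{R}(X_\infty) = 0$, and maximality together with $X_\infty \preceq X_+$ forces $X_\infty = X_+$.

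The quantitative convergence rate is the heart of the statement and where I would spend the most effort. For the case $\rho(X_+^{-1}A) < 1$: the derivative $\mathcal{R}'_{X_+}$ is boundedly invertible (by the series formula above, since $\rho(L_+) < 1$ with $L_+ = X_+^{-1}A$), so Newton's method at a regular root is locally quadratically convergent by the standard estimate $\|X_k - X_+\| \leq c\,\|X_{k-1} - X_+\|^2$, obtained from the second-order Taylor remainder of $\mathcal{R}$ and uniform boundedness of $(\mathcal{R}'_X)^{-1}$ near $X_+$; global quadratic convergence then follows because we have already shown the whole sequence stays in a neighborhood of $X_+$ on which these bounds are uniform. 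The delicate case is $\rho(X_+^{-1}A) = 1$ with the unit-circle eigenvalues semisimple: now $\mathcal{R}'_{X_+}$ is \emph{singular}, so the quadratic argument breaks down and one must resort to the theory of Newton's method at a singular root. The main obstacle will be showing that semisimplicity of the eigenvalues of $L_+$ on the unit circle is exactly the condition that makes $\mathcal{R}'_{X_+}$ have a kernel on which the iteration still contracts at rate $1/2$ — one decomposes $\mathcal{S}^n$ into the null space of $\mathcal{R}'_{X_+}$ and a complement, shows that on the complement convergence remains quadratic, while on (or near) the null space the restriction of the Newton map has derivative $\tfrac12 I$, which is the generic behavior of Newton on a root of multiplicity two. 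Semisimplicity prevents Jordan blocks that would otherwise destroy this clean splitting; tracking the interaction between the two components yields the stated dichotomy "quadratic or linear with rate $1/2$." I would cite~\cite{GuoLan_99_MC} for the detailed singular-root analysis rather than reproduce it, since the bookkeeping there is lengthy, and instead present the structural reason — the operator-concavity of $\mathcal{R}$ and the spectral characterization of $\mathcal{R}'_{X_+}$'s kernel via the unit-circle eigenvalues of $L_+$ — which is what makes the theorem natural.
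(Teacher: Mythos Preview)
The paper does not prove Theorem~\ref{NMENTMTHM}; it is quoted without proof from~\cite{GuoLan_99_MC}, so there is no argument in the paper to compare against. Your outline broadly matches the strategy of Guo and Lancaster, but it contains a genuine error that you should repair.

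Your monotonicity direction is backwards. You correctly compute $\mathcal{R}(X_0)=-A^\top Q^{-1}A\preceq 0$, and you correctly observe that $-(\mathcal{R}'_{X_{k-1}})^{-1}=(-\mathcal{R}'_{X_{k-1}})^{-1}$ is a \emph{positive} operator when $\rho(L_k)<1$. But a positive operator applied to the \emph{negative} semidefinite matrix $\mathcal{R}(X_{k-1})$ yields a negative semidefinite Newton correction, so $X_k\preceq X_{k-1}$: the sequence is nonincreasing, not nondecreasing. (The word ``nondecreasing'' in the displayed theorem is evidently a typo in the paper itself; note that any solution of~\eqref{eq:NMEs} satisfies $X=Q-A^\top X^{-1}A\preceq Q=X_0$, so a nondecreasing sequence starting at $Q$ could never reach $X_+$.) Your inductive hypothesis should therefore be $X_k\succeq X_+$, not $X_k\preceq X_+$, and the operator-concavity argument then reads in the correct direction: Newton applied from above to a concave map stays above the root. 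As written, your chain $X_0\preceq X_1\preceq\cdots\preceq X_+$ is internally inconsistent with $X_0=Q\succeq X_+$.

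Separately, your justification of $\rho(L_k)<1$ via Theorem~\ref{thm21} does not work: that theorem concerns \emph{solutions} of~\eqref{eq:NMEs}, whereas the iterates $X_k$ are not solutions, and there is no general implication $X\succeq X_+\Rightarrow\rho(X^{-1}A)\le\rho(X_+^{-1}A)$ for arbitrary positive definite $X$. In~\cite{GuoLan_99_MC} this step is handled by a direct inductive argument exploiting the Stein equation that defines $X_k$; you should either reproduce that argument or cite it explicitly rather than appeal to Theorem~\ref{thm21}. Your sketch of the singular case ($\rho(X_+^{-1}A)=1$) is reasonable at the level of a proposal, and your decision to defer the detailed null-space bookkeeping to~\cite{GuoLan_99_MC} is consistent with what the paper itself does.
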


Theorem~\ref{NMENTMTHM} states that  if
$\rho(X_+^{-1}A)=1$, convergence is guaranteed under a
addition assumption, i.e., all eigenvalues of $X_+^{-1}A$ on the unit circle are semisimple. In~\cite{Lin06}, Lin and Xu investigate the approach of applying the structure-preserving algorithm (SDA) for solving~\eqref{eq:NMEs} without any assumption on the
unimodular eigenvalues of $X_+^{-1}A$.

\subsection{SDA}
The fundamental idea of SDA is the doubling transformation. In this sense, we  begin with the discussion of the doubling transformation.

Let $ \mathcal{M} - \lambda  \mathcal{L}$ be a matrix pencil consisting of two matrices
 \begin{equation}\label{ssf}
 \mathcal{M} \:=
 \left [  \begin{array}{cr} A & 0 \\ Q & -I_n
\end{array} \right ]
\mbox{ and } \mathcal{L} \:= \left [ \begin{array}{cc} -P & I_n \\
A^\top & 0 \end{array}\right ],
\end{equation}
with $Q$, $P \succeq 0$. This is the so-called the \emph{second
standard symplectic form} (SSF-2). For any solution $\left [\mathcal{M}_{\star}, \mathcal{L}_{\star}\right ]$ within the left null space of $ \left[\begin{array}{c}\mathcal{L} \\-\mathcal{M}\end{array}\right]
$, define  $ \widehat{\mathcal{M}} :=
 \mathcal{M}_{\star} \mathcal{M}$ and $ \widehat{\mathcal{L}} :=
 \mathcal{L}_{\star} \mathcal{L}$. The transformation
 \begin{equation*}
 \mathcal{M} - \lambda  \mathcal{L} \rightarrow   \widehat{\mathcal{M}} - \lambda   \widehat{\mathcal{L}}
\end{equation*}
is called a \emph{doubling transformation}.
Assume further that
\begin{equation*}%\label{mls}
\mathcal{M}_{\star} = \left [
\begin{array}{rc} A(Q-P)^{-1} & 0
\\ -A^\top (Q-P)^{-1} & I_n \end{array} \right ] \mbox{ and }
\mathcal{L}_{\star} :=
\left [
\begin{array}{cr}
I_n & -A(Q-P)^{-1} \\ 0 & A^\top (Q-P)^{-1} \end{array}
\right ].
\end{equation*}
We see that by direction computation,
 \begin{equation*}%\label{mlh}
 \widehat{\mathcal{M}} :=
 \mathcal{M}_{\star} \mathcal{M}
 =
 \left [
 \begin{array}{cr} \widehat{A}
& 0
\\ \widehat{Q} & -I_n \end{array} \right ] \mbox{ and }
\widehat{\mathcal{L}} :=
\mathcal{L}_{\star} \mathcal{L} =
\left [\begin{array}{rc} -\widehat{P} & I_n \\
\widehat{A}^\top & 0 \end{array}\right ],
\end{equation*}
where
\begin{equation}\label{aqp}
\widehat{A} := A(Q-P)^{-1} A,\quad
\widehat{Q} := Q - A^\top (Q-P)^{-1} A \mbox{ and }
\widehat{P} := P + A(Q-P)^{-1} A^\top.
\end{equation}
This implies that if $ Q - P \succ 0$ and $Q - A^\top(Q-P)^{-1} A \succeq 0$, then $(\widehat{\mathcal{M}},\, \widehat{\mathcal{L}})$ is again a SSF-2 form~\cite{Lin06}.
Based on formulae~\eqref{aqp}, we then have the following algorithm, SDA.
\begin{Algorithm}\label{SDA}
{\emph{(SDA for~\eqref{eq:NMEs})}}
\begin{enumerate}
\item  \emph{Take} $A_0 =A$, $Q_0 = Q$, $P_0 = 0$.
\item \emph{For} $k=0,1,\ldots,$ \emph{compute}
\begin{subequations}\label{NMESDA2}
\begin{eqnarray}
  A_{k+1}&=&A_k(Q_k-P_k)^{-1}A_k; \\
  Q_{k+1}&=&Q_k-A_k^\top(Q_k-P_k)^{-1}A_k; \\
  P_{k+1}&=&P_k+A_k(Q_k-P_k)^{-1}A_k^\top.
\end{eqnarray}
\end{subequations}
\end{enumerate}
\end{Algorithm}
Below we quote  from \cite[Theorem~2.1]{Lin06} to guarantee that Algorithm~\ref{SDA} is well-defined, that is, the difference $Q_k - P_k \succ 0$, for all $k$.
\begin{Theorem}\label{thm31}\cite{Lin06}
Let $X \succ 0$ be a solution of~\eqref{eq:NMEs}.
Define $S = X^{-1} A$. Then the sequences
$\{ A_k,\, Q_k,\, P_k \}$ generated by Algorithm~\ref{SDA} satisfy
\begin{enumerate}
\item $A_k = (X-P_k)S^{2^k}$;
\item $0 \leq P_k \leq P_{k+1} < X$ and $Q_k-P_k = (X-P_k) + A_k^\top
(X-P_k)^{-1}A_k > 0$;
\item $X \leq Q_{k+1} \leq Q_k \leq Q$ and $Q_k - X = (S^\top)^{2^k}
(X-P_k)S^{2^k} \leq (S^\top)^{2^k} XS^{2^k}$.
\end{enumerate}
Moreover, we have
\begin{enumerate}
\item $\|A_k\|_2\leq \|X\|_2\|S^{2^k}\|_2$;
\item $\|Q_k - X\|_2 \leq \|X\|_2 \|S^{2^k}\|_2^2$.
\end{enumerate}
\end{Theorem}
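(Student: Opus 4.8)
The statement collects several entangled facts, so the plan is a single induction on $k$ with all of the identities and inequalities in (1)--(3), together with the two operator-norm bounds, taken as the joint hypothesis. It is convenient to abbreviate $W_k:=X-P_k$ and $T_k:=S^{2^k}$, so that $T_{k+1}=T_k^{2}$ and the claims read: $A_k=W_kT_k$; $0\preceq P_k\preceq P_{k+1}$ and $W_k\succ 0$ with $Q_k-P_k=W_k+A_k^{\top}W_k^{-1}A_k$; and $X\preceq Q_{k+1}\preceq Q_k\preceq Q$ with $Q_k-X=T_k^{\top}W_kT_k$. The base case $k=0$ is immediate: $P_0=0$, $W_0=X\succ0$, $A_0=A=X\,(X^{-1}A)=W_0T_0$, and substituting the matrix equation~\eqref{eq:NMEs} itself gives $Q_0-P_0=Q=X+A^{\top}X^{-1}A$ and $Q_0-X=A^{\top}X^{-1}A=(X^{-1}A)^{\top}X(X^{-1}A)=T_0^{\top}W_0T_0$, the monotonicity assertions being vacuous at $k=0$.

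For the inductive step, assume (1)--(3) at level $k$. Because $W_k\succ0$, the identity $Q_k-P_k=W_k+A_k^{\top}W_k^{-1}A_k$ forces $Q_k-P_k\succ0$, so the step $k\to k+1$ of Algorithm~\ref{SDA} is well defined; set $M:=Q_k-P_k=W_k+T_k^{\top}W_kT_k$, and note from the $P$-recurrence that $W_{k+1}=W_k-A_kM^{-1}A_k^{\top}=W_k-W_kT_kM^{-1}T_k^{\top}W_k$. I would then proceed in the following order. (a) Establish $A_{k+1}=W_{k+1}T_{k+1}$ and $Q_{k+1}-X=T_{k+1}^{\top}W_{k+1}T_{k+1}$ by straight substitution of $A_k=W_kT_k$ and $M=W_k+T_k^{\top}W_kT_k$ into the three recurrences and clearing the single inverse $M^{-1}$; the only cancellation needed is $M-T_k^{\top}W_kT_k=W_k$, and it is cleanest to prove $A_{k+1}=W_{k+1}T_{k+1}$ first and then read off $Q_{k+1}-X=A_{k+1}^{\top}T_{k+1}=T_{k+1}^{\top}W_{k+1}T_{k+1}$ from the recurrence for $Q_{k+1}$, rather than expanding $T_{k+1}^{\top}W_{k+1}T_{k+1}$ by hand. (b) Prove $W_{k+1}\succ0$: the expression $W_{k+1}=W_k-A_kM^{-1}A_k^{\top}$ is the Schur complement of the $(2,2)$ block in the symmetric block matrix $\left[\begin{smallmatrix}W_k & A_k\\ A_k^{\top} & M\end{smallmatrix}\right]=\left[\begin{smallmatrix}W_k & W_kT_k\\ T_k^{\top}W_k & W_k+T_k^{\top}W_kT_k\end{smallmatrix}\right]$, and this block matrix equals $L\,\mathrm{diag}(W_k,W_k)\,L^{\top}$ with $L=\left[\begin{smallmatrix}I_n & 0\\ T_k^{\top} & I_n\end{smallmatrix}\right]$; since $L$ is invertible and $W_k\succ0$, the block matrix is positive definite, hence so is its Schur complement, which gives $P_{k+1}\prec X$. (c) Using (a) and $A_{k+1}=W_{k+1}T_{k+1}$, obtain $Q_{k+1}-P_{k+1}=(Q_{k+1}-X)+(X-P_{k+1})=W_{k+1}+A_{k+1}^{\top}W_{k+1}^{-1}A_{k+1}\succ0$. (d) Read off monotonicity: $P_{k+1}-P_k=A_kM^{-1}A_k^{\top}\succeq0$ and $Q_k-Q_{k+1}=A_k^{\top}M^{-1}A_k\succeq0$ because $M\succ0$, while $Q_{k+1}\succeq X$ follows from $Q_{k+1}-X=T_{k+1}^{\top}W_{k+1}T_{k+1}\succeq0$, and $Q_{k+1}\preceq Q_k\preceq Q$ from the hypothesis. (e) Finally $Q_{k+1}-X=T_{k+1}^{\top}W_{k+1}T_{k+1}\preceq T_{k+1}^{\top}X\,T_{k+1}$ since $W_{k+1}=X-P_{k+1}\preceq X$, and the two norm bounds follow by taking $\|\cdot\|_2$ in $A_k=W_kT_k$ and $Q_k-X=T_k^{\top}W_kT_k$, using $0\preceq W_k\preceq X$ (so $\|W_k\|_2\le\|X\|_2$) and $\|T_k^{\top}\|_2=\|T_k\|_2=\|S^{2^k}\|_2$.

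The main obstacle is step (b): $W_{k+1}=W_k-W_kT_k(W_k+T_k^{\top}W_kT_k)^{-1}T_k^{\top}W_k$ is not manifestly positive definite, and the congruence/Schur-complement observation above (equivalently, a Sherman--Morrison--Woodbury manipulation showing $\|M^{-1/2}T_k^{\top}W_k^{1/2}\|_2<1$) is the point that demands care. A minor pitfall is the bookkeeping of $M^{-1}$ in step (a), which is why I would fix the order of the two identities as indicated. One could instead obtain (a) by verifying inductively the invariant that $\mathcal{M}_k$ carries the column $\left[\begin{smallmatrix}I_n\\ X\end{smallmatrix}\right]$ to $\mathcal{L}_k\left[\begin{smallmatrix}I_n\\ X\end{smallmatrix}\right]S^{2^k}$ for the SDA pencils (with~\eqref{big} as base case and the doubling relation $\mathcal{M}_{\star,k}\mathcal{L}_k=\mathcal{L}_{\star,k}\mathcal{M}_k$ for the step), which yields (1) and the formula for $Q_k-X$ at once. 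Note finally that invertibility of $A$, equivalently of $S$, is never used: once $M^{-1}$ is cleared, all identities in (a) are polynomial in $T_k$.
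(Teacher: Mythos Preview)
The paper does not supply its own proof of this theorem; it is quoted verbatim from \cite{Lin06} and used as a black box, so there is nothing in the paper to compare against. Your inductive argument is correct and is essentially the argument one finds in \cite{Lin06}: the joint induction on the three identities, with the Schur--complement/congruence step singled out as the only nontrivial point, is exactly how the result is established there.

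A couple of small remarks. First, your claim that ``the monotonicity assertions are vacuous at $k=0$'' is not literally true, since item~(2) at $k=0$ already speaks of $P_1$ and item~(3) of $Q_1$; but your inductive step (d) supplies these once the base identities are in place, so the logic is sound---just reword the base case. Second, your suggested shortcut in (a), reading off $Q_{k+1}-X=A_{k+1}^{\top}T_{k+1}$, works because $Q_k-X=A_k^{\top}T_k$ holds inductively and then $Q_{k+1}-X=A_k^{\top}(I-M^{-1}W_k)T_k=A_k^{\top}M^{-1}(T_k^{\top}W_kT_k)T_k=A_k^{\top}M^{-1}A_k^{\top}T_k^{2}=A_{k+1}^{\top}T_{k+1}$; it is worth writing this one line out, since otherwise the reader has to reconstruct it. The alternative route you mention at the end---propagating the invariant $\mathcal{M}_k\!\left[\begin{smallmatrix}I_n\\ X\end{smallmatrix}\right]=\mathcal{L}_k\!\left[\begin{smallmatrix}I_n\\ X\end{smallmatrix}\right]S^{2^k}$ through the doubling transformation---is also the viewpoint taken in \cite{Lin06}, and is equivalent to your direct substitution.
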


From Theorem~\ref{thm31}, we know that if $\rho{(X^{-1}A)} < 1$, then
the SDA is quadratically convergent. If $\rho{(X^{-1}A)} = 1$,
Chiang et al. in~\cite{ChiangPHD2009} proved that  Algorithm~\ref{SDA} for NME is linearly convergent with rate $1/2$, without any assumption on the unimodular eigenvalues of $\rho{(X^{-1}A)}$. In this case, we are interested in exploring a strategy of shifting unimodular eigenvalues of $X^{-1}A$ so that the convergence of above algorithms can be speeded up.

\section{The Shifting technique}
Assume that $X \succ 0$ is a solution of~\eqref{eq:NMEs}. Then the solution $X$ is highly related to the generalized eigenspace of the pencil
\begin{equation*}
\mathcal{M} - \lambda \mathcal{\mathcal{L}}
= \begin{bmatrix}
A   & 0%
\\ Q  &  -I_n
\end{bmatrix} - \lambda \begin{bmatrix}
0 & I_n%
\\ A^\top  & 0
\end{bmatrix}.
\end{equation*}
That is, if $X\succ 0$ is a solution of~\eqref{eq:NMEs}
if and only if $X$ satisfies~\eqref{big}.%the fact that
%\begin{equation} \label{matpencil}
%\begin{bmatrix}
%A   & 0%
%\\ Q  &  -I_n
%\end{bmatrix}
%\begin{bmatrix}
%I_n
%\\ X
%\end{bmatrix}  =
%%
%\begin{bmatrix}
%0 & I_n%
%\\ A^\top  & 0
%\end{bmatrix}
%\begin{bmatrix}
%I_n\\
%X
%\end{bmatrix} X^{-1}A.
%\end{equation}
%for some matrix $S = X^{-1}A \in \mathbb{R}^{n\times n}$.

Corresponding to~\eqref{big}, let us focus on the discussion with the
shifts of eigenvalues of the matrix pencil $\mathcal{M}-\lambda L$. To begin with, we consider a single shift of an eigenvalue of the matrix pencil $\mathcal{M}-\lambda L$.
\begin{Lemma}\label{lem1}
 Let $\mathcal{M}-\lambda L$ be a matrix pencil with $\mathcal{M}v = \lambda_0 Lv$ for some nonzero vector $v$. If $r$ is a vector with $r^{\top}v =1$, then for any scalar $\lambda_1$, the eigenvalues of the matrix pair
\begin{equation*}
\widehat{\mathcal{M}}-\lambda\widehat{L}  = \mathcal{M}+(\lambda_1-\lambda_0)Lvr^\top -\lambda L,
\end{equation*}
consist of those of $\mathcal{M}-\lambda L$, except that one eigenvalue $\lambda_0$ of $\mathcal{M}-\lambda L$  is replaced by $\lambda_1$.
\end{Lemma}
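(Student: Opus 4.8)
The plan is to show that the two pencils have the same eigenvalues away from $\lambda_0,\lambda_1$ by exhibiting a correspondence of eigenpairs, and then to handle the single shifted eigenvalue directly. First I would observe that $\widehat{\mathcal{M}} - \lambda\widehat{L} = (\mathcal{M}-\lambda L) + (\lambda_1-\lambda_0)Lvr^\top$ is a rank-one perturbation, so for $\mu \notin \{\lambda_0\}$ with $\mathcal{M}-\mu L$ invertible one can write
\begin{equation*}
\widehat{\mathcal{M}} - \mu\widehat{L} = (\mathcal{M}-\mu L)\bigl(I + (\lambda_1-\lambda_0)(\mathcal{M}-\mu L)^{-1}Lvr^\top\bigr),
\end{equation*}
and by the matrix determinant lemma $\det(\widehat{\mathcal{M}}-\mu\widehat{L}) = \det(\mathcal{M}-\mu L)\cdot\bigl(1 + (\lambda_1-\lambda_0)\,r^\top(\mathcal{M}-\mu L)^{-1}Lv\bigr)$. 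The key computational step is to evaluate the scalar $r^\top(\mathcal{M}-\mu L)^{-1}Lv$ using the eigenrelation $\mathcal{M}v=\lambda_0 Lv$: from $(\mathcal{M}-\mu L)v = (\lambda_0-\mu)Lv$ we get $(\mathcal{M}-\mu L)^{-1}Lv = \tfrac{1}{\lambda_0-\mu}v$, hence the scalar factor equals $1 + \tfrac{\lambda_1-\lambda_0}{\lambda_0-\mu} = \tfrac{\lambda_1-\mu}{\lambda_0-\mu}$.

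From this identity, $\det(\widehat{\mathcal{M}}-\mu\widehat{L}) = \dfrac{\lambda_1-\mu}{\lambda_0-\mu}\det(\mathcal{M}-\mu L)$ wherever both sides make sense, which already shows: any root $\mu\ne\lambda_0,\lambda_1$ of the right-hand determinant is a root of the left with the same multiplicity; the factor $\lambda_0-\mu$ in the denominator cancels exactly one copy of $(\mu-\lambda_0)$ coming from $\det(\mathcal{M}-\lambda L)$ (using that $\lambda_0$ is an eigenvalue, so that factor is present); and the numerator $\lambda_1-\mu$ inserts $\lambda_1$ as a new root. To make this rigorous without dividing by zero, I would argue at the level of characteristic polynomials: both $\det(\mathcal{M}-\lambda L)$ and $\det(\widehat{\mathcal{M}}-\lambda\widehat{L})$ are polynomials in $\lambda$ of the same degree (since $\widehat L = L$, the leading behavior is unchanged), and the identity $(\lambda_0-\lambda)\det(\widehat{\mathcal{M}}-\lambda\widehat{L}) = (\lambda_1-\lambda)\det(\mathcal{M}-\lambda L)$ holds for all $\lambda$ outside a finite set, hence as a polynomial identity everywhere. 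Comparing factorizations of the two sides then yields precisely the claimed spectrum.

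The main obstacle is the bookkeeping at $\lambda=\lambda_0$ and $\lambda=\lambda_1$: one must check that $\lambda_0$ appears in $\det(\mathcal{M}-\lambda L)$ with a multiplicity that drops by exactly one (no more), and that $\lambda_1$ gains multiplicity exactly one, and this requires a short degree/normalization argument rather than just the generic-$\mu$ computation above. A couple of degenerate cases also need a remark — if $\lambda_1=\lambda_0$ the perturbation is zero and there is nothing to prove, and if $\mathcal{M}-\lambda L$ is singular for all $\lambda$ (non-regular pencil) the statement is read in terms of the nontrivial part of the Kronecker form; in the regular case relevant to \eqref{big} and to the solutions of \eqref{eq:NMEs}, the polynomial identity argument is clean. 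I expect the whole proof to be short once the scalar $r^\top(\mathcal{M}-\mu L)^{-1}Lv = \tfrac{1}{\lambda_0-\mu}$ is in hand.
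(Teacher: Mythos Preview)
Your argument is correct and follows essentially the same route as the paper: factor the rank-one perturbation, apply the matrix determinant lemma, and use $(\mathcal{M}-\mu L)^{-1}Lv=\tfrac{1}{\lambda_0-\mu}v$ to obtain $\det(\widehat{\mathcal{M}}-\mu\widehat L)=\tfrac{\lambda_1-\mu}{\lambda_0-\mu}\det(\mathcal{M}-\mu L)$. The paper's proof is terser (it also notes $\widehat{\mathcal{M}}v=\lambda_1 Lv$ and then simply cites the determinant identity), while you add the polynomial-identity and multiplicity bookkeeping that the paper leaves implicit.
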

\begin{proof}
Since $\mathcal{M}v = \lambda_0 Lv$ and $r^{\top}v =1$, we have
\begin{align}\label{eq1}
\widehat{\mathcal{M}}v &=\lambda_1\widehat{L}v.
\end{align}
Note that  $(\mathcal{M} - \lambda \mathcal{L})v = \lambda_0 \mathcal{L}v -\lambda \mathcal{L}v = (\lambda_0 - \lambda)\mathcal{L}v$.
Also, for any $\lambda \neq \lambda_0$, we see that
\begin{eqnarray}
\det(\widehat{\mathcal{M}}-\lambda\widehat{\mathcal{L}})&=&\det(\mathcal{M}-\lambda \mathcal{L})\det(I_n+(\lambda_1-\lambda_0)Lvr^\top(\mathcal{M}-\lambda \mathcal{L})^{-1})\nonumber\\
&=&\det(\mathcal{M}-\lambda \mathcal{L})(1+(\lambda_1-\lambda_0)r^\top(\mathcal{M}-\lambda \mathcal{L})^{-1}\mathcal{L}v)\nonumber\\
%&=&\det(\mathcal{M}-\lambda L)(1+\frac{\lambda_1-\lambda_0}{\lambda_0-\lambda}r^\top v)\nonumber \\
&=&\frac{\lambda_1-\lambda}{\lambda_0-\lambda}\det(\mathcal{M}-\lambda \mathcal{L}).\label{eq2}
\end{eqnarray}
Thus, the theorem follows from~\eqref{eq1} and~\eqref{eq2}.

\end{proof}
Similar to the proof given above, we then have the following result that
$k$ eigenvalues of  $\mathcal{M}-\lambda \mathcal{L}$ are shifted simultaneously.
\begin{Theorem}\label{lem:shiftML r2 eigs}
Let $\mathcal{M}-\lambda \mathcal{L}$ be a matrix pencil with eigenvalues $\lambda_1,\cdots,\lambda_k$  that satisfy
\begin{equation*}
\mathcal{M}v_1 = \lambda_1 \mathcal{L}v_1 ,\cdots, \mathcal{M}v_k = \lambda_k \mathcal{L}v_k
\end{equation*}
for some nonzero vectors $v_1,\cdots,v_k\in \mathbb{C}^n$.
Suppose  $V= [v_1, \cdots ,v_k]\in\mathbb{C}^{n\times k}$, $\Lambda=\mbox{diag}(\lambda_1,\cdots,\lambda_k)$ and $\widehat{\Lambda}=\mbox{diag}(\hat{\lambda}_1,\cdots,\hat{\lambda}_k)\in \mathbb{C}^{k\times k}$ for some  $\hat{\lambda}_1,\cdots,\hat{\lambda}_k \in \mathbb{C}$.
If $R_1$ and $R_2$ are two matrices in $\mathbb{C}^{n\times k}$ such that
\begin{equation}\label{cond}
R_1^{\top}V =\widehat{\Lambda}-\Lambda \mbox{ and } R_2^{\top}V =0,
\end{equation}
then the eigenvalues of the matrix
\begin{equation*}
\widehat{\mathcal{M}}-\lambda \widehat{\mathcal{L}} : =  (M+\mathcal{L}VR_1^{\top})-\lambda (\mathcal{L}+\mathcal{M}VR_2^{\top}),
\end{equation*}
consist of those of $\mathcal{M}-\lambda \mathcal{L}$, except that eigenvalues $\lambda_1,\cdots,\lambda_k$
of $\mathcal{M}-\lambda \mathcal{L}$ are replaced by $\hat{\lambda}_1,\cdots,\hat{\lambda}_k$. Moreover,  $\hat{\lambda}_1,\cdots,\hat{\lambda}_k$ are eigenvalues of $\widehat{\mathcal{M}}-\lambda\widehat{\mathcal{L}}$ corresponding to the eigenvectors $v_1,\cdots,v_k$, respectively.
\end{Theorem}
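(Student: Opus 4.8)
The plan is to imitate the proof of Lemma~\ref{lem1}, the new ingredient being the matrix identity $\mathcal{M}V=\mathcal{L}V\Lambda$ supplied by the $k$ prescribed eigenpairs; this lets the two rank-$k$ perturbations be consolidated into a single one, after which Sylvester's determinant identity does the counting.

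First I would settle the eigenvector claim by a direct computation. From $\mathcal{M}V=\mathcal{L}V\Lambda$ and~\eqref{cond},
\[
\widehat{\mathcal{M}}V=\mathcal{M}V+\mathcal{L}VR_1^{\top}V=\mathcal{L}V\Lambda+\mathcal{L}V(\widehat{\Lambda}-\Lambda)=\mathcal{L}V\widehat{\Lambda},\qquad
\widehat{\mathcal{L}}V=\mathcal{L}V+\mathcal{M}VR_2^{\top}V=\mathcal{L}V,
\]
so $\widehat{\mathcal{M}}V=\widehat{\mathcal{L}}V\widehat{\Lambda}$; reading this columnwise gives $\widehat{\mathcal{M}}v_i=\hat{\lambda}_i\widehat{\mathcal{L}}v_i$ for every $i$, which is precisely the last assertion.

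The main step is the eigenvalue count. Using $\mathcal{M}VR_2^{\top}=\mathcal{L}V\Lambda R_2^{\top}$, I would rewrite
\[
\widehat{\mathcal{M}}-\lambda\widehat{\mathcal{L}}=(\mathcal{M}-\lambda\mathcal{L})+\mathcal{L}V\bigl(R_1^{\top}-\lambda\Lambda R_2^{\top}\bigr),
\]
a perturbation of $\mathcal{M}-\lambda\mathcal{L}$ of rank at most $k$. For $\lambda$ outside the finite set $\sigma(\mathcal{M},\mathcal{L})\cup\{\lambda_1,\dots,\lambda_k\}$ the matrix $\mathcal{M}-\lambda\mathcal{L}$ is invertible, and $(\mathcal{M}-\lambda\mathcal{L})V=\mathcal{L}V(\Lambda-\lambda I_k)$ gives $(\mathcal{M}-\lambda\mathcal{L})^{-1}\mathcal{L}V=V(\Lambda-\lambda I_k)^{-1}$. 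Factoring out $\det(\mathcal{M}-\lambda\mathcal{L})$, applying $\det(I_n+FG)=\det(I_k+GF)$ with $F=(\mathcal{M}-\lambda\mathcal{L})^{-1}\mathcal{L}V$ and $G=R_1^{\top}-\lambda\Lambda R_2^{\top}$, and using~\eqref{cond} in the form $(R_1^{\top}-\lambda\Lambda R_2^{\top})V=\widehat{\Lambda}-\Lambda$, I would obtain
\[
\det(\widehat{\mathcal{M}}-\lambda\widehat{\mathcal{L}})=\det(\mathcal{M}-\lambda\mathcal{L})\,\det\!\bigl(I_k+(\widehat{\Lambda}-\Lambda)(\Lambda-\lambda I_k)^{-1}\bigr)=\prod_{i=1}^{k}\frac{\hat{\lambda}_i-\lambda}{\lambda_i-\lambda}\,\det(\mathcal{M}-\lambda\mathcal{L}),
\]
the last equality because every matrix inside the $k\times k$ determinant is diagonal. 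As both sides are polynomials in $\lambda$ agreeing off a finite set, the identity holds identically; and since each $\lambda_i$ is a zero of $\det(\mathcal{M}-\lambda\mathcal{L})$ (indeed $(\mathcal{M}-\lambda_i\mathcal{L})v_i=0$ with $v_i\neq0$), the factors $\lambda_i-\lambda$ cancel, so $\det(\widehat{\mathcal{M}}-\lambda\widehat{\mathcal{L}})$ equals $\det(\mathcal{M}-\lambda\mathcal{L})$ with the zeros $\lambda_1,\dots,\lambda_k$ replaced by $\hat{\lambda}_1,\dots,\hat{\lambda}_k$, which is the claim.

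The algebra is routine; the only point needing care is the excluded set $\sigma(\mathcal{M},\mathcal{L})\cup\{\lambda_1,\dots,\lambda_k\}$, where the inverses are undefined, and this is handled by the usual polynomial-identity argument (two polynomials agreeing on a cofinite set are equal), which also absorbs multiplicities when the $\lambda_i$ are repeated or coincide with other eigenvalues. It is worth noting that ``replacing eigenvalues'' presupposes $\mathcal{M}-\lambda\mathcal{L}$ regular, which in our setting is guaranteed by Theorem~\ref{thm20}.
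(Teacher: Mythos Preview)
Your proof is correct and follows essentially the same approach as the paper: both use $\mathcal{M}V=\mathcal{L}V\Lambda$ to obtain $(\mathcal{M}-\lambda\mathcal{L})^{-1}\mathcal{L}V=V(\Lambda-\lambda I_k)^{-1}$, then apply Sylvester's determinant identity to the rank-$k$ perturbation and evaluate the resulting diagonal $k\times k$ determinant, with the eigenvector claim checked separately. Your version is in fact a bit more careful than the paper's, explicitly invoking the polynomial-identity argument to handle the excluded $\lambda$'s and multiplicities, and noting that the statement tacitly assumes regularity of the pencil.
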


\begin{proof}
Since $\mathcal{M}V = \mathcal{L}V\Lambda$, we have
$(\mathcal{M} - \lambda \mathcal{L})V %= \mathcal{L}V\Lambda -\lambda \mathcal{L}V
= \mathcal{L}V(\Lambda - \lambda I_n)$. Thus,
$(\mathcal{M} - \lambda \mathcal{L})^{-1} \mathcal{L}V =
V(\Lambda - \lambda I_n)^{-1} $. This implies that for any $\lambda \neq \lambda_1,\cdots,\lambda_k$, the determinant of
$\widehat{\mathcal{M}} - \lambda \widehat{\mathcal{L}}$ is
\begin{eqnarray*}
\det(\widehat{\mathcal{M}} - \lambda \widehat{\mathcal{L}})
%&=& \det\left(\mathcal{M} + \mathcal{L}VR_1^{\top} - \lambda (\mathcal{L}+\mathcal{M}VR_2^{\top})\right)\\
& = & \det(\mathcal{M} - \lambda \mathcal{L}) \det( I_n + (\mathcal{L}VR_1^{\top}
- \lambda \mathcal{L}V\Lambda R_2^{\top})
 (\mathcal{M}- \lambda \mathcal{L})^{-1})\\
& = & \det(\mathcal{M} - \lambda \mathcal{L}) \det( I_k + R_1^{\top}
(\mathcal{M} - \lambda \mathcal{L})^{-1} \mathcal{L}V -
\lambda\Lambda R_2^{\top}
(\mathcal{M} - \lambda \mathcal{L})^{-1} \mathcal{L}V
)\\
%%
%& = & \det(\mathcal{M} - \lambda \mathcal{L}) \det( I_k + R_1^{\top}
% V(\Lambda - \lambda I_n)^{-1}
%-
%\lambda\Lambda R_2^{\top}
%V(\Lambda - \lambda I_n)^{-1}
%)\\
%%
& = & \det(\mathcal{M} - \lambda \mathcal{L}) \det\left( I_k + (\widehat{\Lambda}-\Lambda)
\mbox{diag}\left(\frac{1}{\lambda_1 -\lambda}, \frac{1}{\lambda_2 -\lambda} \ldots, \frac{1}{\lambda_k -\lambda} \right )\right )\\
& = & \det(\mathcal{M} - \lambda \mathcal{L}) \left( \frac{\hat{\lambda}_1 -\lambda}{\lambda_1 -\lambda}\right )\left (\frac{\hat{\lambda}_2 - \lambda}{\lambda_2 -\lambda}\right ) \ldots \left ( \frac{\hat{\lambda}_k - \lambda}{\lambda_k -\lambda} \right ).
\end{eqnarray*}

Also, by~\eqref{cond}, we have
\begin{align*}
\widehat{\mathcal{M}} V
%= \mathcal{M}V+\mathcal{L}VR_1^\top V = \mathcal{L}V\Lambda+\mathcal{L}V(\widehat{\Lambda}-\Lambda)
= \mathcal{L}V\widehat{\Lambda} =\widehat{\mathcal{L}} V\widehat{\Lambda}.
\end{align*}
This completes the proof.
\end{proof}

Note that the matrix $R_1$ can be obtained by using the Gram--Schmidt process to the column vectors of $V$ and $R_2$ can be obtained from the vectors in the orthogonal space of the space spanned by the column vectors of $V$. We use the following example to demonstrate an application of the shifting technique discussed above.

\begin{Example}
Assume $n = 1$. Then \eqref{eq:NMEs}  can be written as
\begin{eqnarray}
     x + \frac{a^2}{x} = q, \label{eq:NMEs1}
\end{eqnarray}
where  $a,q\in\mathbb{R}$ and $q>0$ and the corresponding matrix pencil is denoted by
\begin{align}\label{eq:m1}
\mathcal{M}_1 - \lambda \mathcal{L}_1
= \begin{bmatrix}
a   & 0%
\\ q  &  -1
\end{bmatrix} - \lambda \begin{bmatrix}
0 & 1%
\\ a  & 0
\end{bmatrix}.
\end{align}
Let $\Delta = q^2-4a^2$ be the discriminant of~\eqref{eq:NMEs1} and $\Gamma(\lambda) = \det\left(
\mathcal{M}_1 - \lambda \mathcal{L}_1\right)=-a\lambda^2+q\lambda-a$ be the determinant of~\eqref{eq:m1}. Corresponding to~\eqref{eq:psi}, we define the function $\psi_1(\lambda)$ such that
\begin{eqnarray}\label{eq:psi_1}
\psi_1(\lambda)&= \frac{a\lambda^{2}+q\lambda+a}{\lambda}.
\end{eqnarray}
It is clear that $\psi_1(\lambda)$ is regular.
Let $\lambda=e^{i\theta}$ for every  $\theta\in\mathbb{R}$. Substituting this $\lambda$ into~\eqref{eq:psi_1}, we obtain $\psi_1(e^{i\theta})=2a\cos(\theta)+q$.
This implies that $\psi_1(e^{i\theta})\geq 0$  for every  $\theta\in\mathbb{R}$
% if and only if $\psi_1(e^{i \pi})\geq 0$ and $\psi_1(e^{i 0})\geq 0$
if and only if $\Delta \geq 0$.
Upon using Theorem~\ref{thm20}, we know that
\eqref{eq:NMEs1} has no symmetric positive definite solution, provided
$\Delta < 0$ and has a maximal solution $x_+ > 0$, provided $\Delta \geq 0$.  In particular, $x_+ \geq |a|$, since $\rho(x_+^{-1} a) < 1$ (see Theorem~\ref{thm21}).

%
%If $\Delta<0$, $|a|=1$. % Then $x_1=\bar{x}_2,|x_1|=|x_2|=1$.
%In this case, $\psi_1(e^{i\theta})<2(a\cos(\theta)+|a|)=0$ for $\theta=\frac{\pi}{2}(a=-1)$ or
%$\theta=\frac{3\pi}{2}(a=1)$. Therefore, \eqref{eq:NMEs1} has no symmetric positive definite solution from Theorem~\ref{thm20}.

%If $\Delta\geq 0$, We get $\psi_1(e^{i\theta})\geq 2(a\cos(\theta)+|a|)\geq 0$ for all
%$\theta\in\mathbb{R}$ and $\psi_1(e^{i\frac{\pi}{2}})=q>0$. By Theorem~\ref{thm20} and Theorem~\ref{thm21} we know that \eqref{eq:NMEs1} has maximum solution $x_+>0$ and $x_+\leq |a|$.

%In the case $\Delta \geq 0$ and $x_+=|a|$. That is, $q=|a|+\frac{a^2}{|a|}=2|a|>0$. We get $\Delta=0$.
% Any positive solution $x$ of $\eqref{eq:NMEs1}$ also satisfies that
%\begin{align*}
%(\sqrt{x}-\frac{a}{\sqrt{x}})^2=0,
%\end{align*}
%here we assume that $a>0$, it follows that $x=a$.

Now we are ready to rewrite Algorithm~\eqref{SDA} corresponding to
~\eqref{eq:NMEs1} as follows.
\begin{Algorithm}\label{SDA1}
{\emph{(SDA for solving \eqref{eq:NMEs1})}}
\begin{enumerate}
\item  \emph{Take} $a_0 =a$, $q_0 = q$, $p_0 = 0$.
\item \emph{For} $k=0,1,\ldots,$ \emph{compute}
\begin{subequations}\label{NMESDA1}
\begin{eqnarray}
  a_{k+1}&=&\frac{a_k^2}{q_k-p_k}; \\
  q_{k+1}&=&q_k-a_{k+1}; \\
  p_{k+1}&=&p_k+a_{k+1}.
\end{eqnarray}
\end{subequations}
\end{enumerate}
\end{Algorithm}

 If the maximal solution  $x_+>0$ of~\eqref{eq:NMEs1} exists, by Theorem~\ref{thm31}, we have
\begin{eqnarray*}
a_k &= & (x_+-p_k)(\frac{a}{x_+})^{2^k}, \\
q_k-p_k &=& (x_+-p_k) + \frac{a_k^2}{x_+-p_k}, \\
q_k - x_+ &=& (x_+-p_k)(\frac{a}{x_+})^{2^{k+1}}.
\end{eqnarray*}

Observe further that if $x_+=|a|$,  we obtain $q=|a|+\frac{a^2}{|a|}=2|a|>0$
by substituting $x_+$ into~\eqref{eq:NMEs1}.  It follows that $\Delta = 0$ and any positive solution satisfies
\begin{align*}
(\sqrt{x}-\frac{a}{\sqrt{x}})^2=0.
\end{align*}
Without loss of generality, assume $a > 0$. %Then, we have $x = x_+= a$, the unique positive solution, and
%
%In the critical case $q=2a$ for \eqref{eq:NMEs1}, $x_+=a>0$.  It is easy to show that
By induction and~\eqref{NMESDA1}, it is easy to see that
\begin{align*}
&a_k=\frac{a}{2^k},\quad q_k=\frac{(2^k+1)a}{2^k},\quad q_k=\frac{(2^k-1)a}{2^k},\,\mbox{ for } k=0,1,\ldots\\
&\limsup\limits_{k\rightarrow\infty}\sqrt[k]{|q_k-a|}  =\frac{1}{2}.
\end{align*}
This means that the sequence $q_k$ converges linearly to $a$ with rate $\frac{1}{2}$. On the other hand, $\Gamma(\lambda)=-a(\lambda-1)^2$ implies that~\eqref{eq:m1} has the
the right eigenspace $E_1=\mbox{Ker}\{\mathcal{M}_1-1\mathcal{L}_1\}=\mbox{Span}\{\begin{bmatrix}1 \\ a\end{bmatrix}\} $
%and
%the left eigenspace $E_2=\mbox{Ker}\{\mathcal{M}_1^\top-1\mathcal{L}_1^\top\}=\mbox{Span}\{\begin{bmatrix}1 \\ -1\end{bmatrix}\} $
corresponding to the eigenvalue $1$. Let $v_1^\top = [1 , a]$. By Theorem~\ref{lem:shiftML r2 eigs}, define two corresponding vectors $r_1^\top = [r-1, 0]$, for some constant $0 < r <1$, and $r_2^{\top} = [0, 0]$ such that the new matrices $\widehat{\mathcal{M}}_1$ and  $ \widehat{\mathcal{L}}_1$ is given by
\begin{eqnarray*}
\widehat{\mathcal{M}}_1 &:=& \mathcal{M}_1 + \mathcal{L}_1v_1 r_1^\top =
\left[\begin{array}{cc} ar & 0 \\ a(1+r) & -1\end{array}\right],\\
\widehat{\mathcal{L}}_1 &:=& \mathcal{L}_1 + \mathcal{M}_1v_1 r_2^\top =
\left[\begin{array}{cc}0 & 1 \\a & 0\end{array}\right].\\
\end{eqnarray*}
By direct computation, we see that the eigenvalues of the new matrix pencil $\widehat{\mathcal{M}}_1 - \lambda \widehat{\mathcal{L}}_1$
 are $r$ and $1$ and the eigenvectors can be chosen as $[1,a]^\top$ and $[1, ar]^\top$, respectively. Let $\hat{v}_1^\top = [1 , ar]$. Using the same constant $r$ given above, we define two vectors $\hat{r}_1^\top = [\frac{1}{r}-1, 0]$ and $\hat{r}_2^{\top} = [0, 0]$ such that
 \begin{eqnarray*}
\widehat{\mathcal{M}}_2 &:=& \widehat{\mathcal{M}}_1 + \widehat{\mathcal{L}}_1\hat{v}_1 \hat{r}_1^\top =
\left[\begin{array}{cc} a & 0 \\ a(r + \frac{1}{r}) & -1\end{array}\right],\\
\widehat{\mathcal{L}}_2 &:=& \widehat{\mathcal{L}}_1 + \widehat{\mathcal{M}}_1\hat{v}_1 \hat{r}_2^\top =
\left[\begin{array}{cc}0 & 1 \\a & 0\end{array}\right].\\
\end{eqnarray*}
it is worth noting that structure the pencil $\widehat{\mathcal{M}}_2 - \lambda \widehat{\mathcal{L}}_2$  is indeed SSF-2. Also,
the eigenvalues of the matrix pencil  $\widehat{\mathcal{M}}_2 - \lambda \widehat{\mathcal{L}}_2$ are $r$ and $\frac{1}{r}$ with eigenvectors $[1, \frac{a}{r}]^\top$ and $[1, ar]^\top$.
 Note that the maximal solution $\hat{x}_+$ of
\begin{eqnarray}
     x + \frac{a^2}{x} = a(r + \frac{1}{r}), \label{eq:NMEs2}
\end{eqnarray}
 is $\hat{x}_+ = \frac{a}{r}$ and $\frac{a}{\hat{x}_+} = r < 1$.
This implies that if we apply Algortihm~\ref{SDA1} to find the maximal solution of~\eqref{eq:NMEs2}
%\begin{eqnarray}
%     x + \frac{a^2}{x} = a(r + \frac{1}{r}), \label{eq:NMEs2},
%\end{eqnarray}
the convergence rate of the iterations is quadratic.
%\begin{equation}
%\left[\begin{array}{cc} a & 0 \\ a(r + \frac{1}{r}) & -1\end{array}\right]
%\left[\begin{array}{cc}1 & 1 \\ \frac{1}{r} & ar\end{array}\right]
% = \left[\begin{array}{cc}0 & 1 \\a & 0\end{array}\right]
% \left[\begin{array}{cc}1 & 1 \\ \frac{1}{r} & ar\end{array}\right]
% \left[\begin{array}{cc}r & 0 \\ 0  &  \frac{1}{r} \end{array}\right].
% \end{equation}
By the continuity dependence of the solution of the NME,
the maximal solution of~\eqref{eq:NMEs1} can be obtained by
taking  $r\rightarrow 1^-$ so that $\hat{x}_+$ approaches $x_+$, the maximal solution of~\eqref{eq:NMEs1}.

\end{Example}

\section{Conclusion}
In this work, we provide an approache of shifting eigenvalues of general matrix equations. Our goal is to remove the singularities happened while solving NME. Currently, we have not applied this method to a much more general NME or any other nonlinear matrix equation. We believe this research would propose an avenue for speeding up the numerical approaches for solving nonlinear matrix equations.

  \section*{Acknowledgment}
This research work is partially supported by the National Science Council and
the National Center for Theoretical Sciences in Taiwan.

%\bibliographystyle{plain}
%\bibliography{chiang_new}

\end{document}